\DeclareMathAlphabet{\mathpzc}{OT1}{pzc}{m}{it}
\newtheorem{theorem}{Theorem}[section]
\newtheorem*{theorem*}{Theorem}
\newtheorem{lemma}[theorem]{Lemma}
\newtheorem*{lemma*}{Lemma}
\newtheorem{corollary}[theorem]{Corollary}
\theoremstyle{definition}
\newtheorem{definition}[theorem]{Definition}
\newtheorem{notation}[theorem]{Notation}
\theoremstyle{remark}
\newtheorem{remark}[theorem]{Remark}
\newtheorem{question}[theorem]{Question}
\DeclareMathOperator{\Ext}{Ext}
\DeclareMathOperator{\Pic}{Pic}
\DeclareMathOperator{\Tot}{Tot}
\numberwithin{equation}{section}
\begin{document}

\title{Isomorphisms of Moduli Spaces}

\author{C. Casorr\'an Amilburu}

\author{S. Barmeier}

\author{B. Callander}

\author{E. Gasparim}
\address{Elizabeth Gasparim and Brian Callander}
\address{ IMECC -- UNICAMP,
Rua S\'ergio Buarque de Holanda, 651,
 Cidade Universit\'aria Zeferino Vaz,
Distr. Bar\~ao Geraldo, Campinas SP,  Brasil
13083-859 }
\email{gasparim@ime.unicamp.br}
\email{briancallander@googlemail.com}

\address{Carlos Casorr\'an Amilburu,
Depto. de Estad\'istica e Investigaci\'on Operativa, 
Universidad de Alicante,03080-Alicante España}
\email{casorranamilburu@msn.com}

\address{Severin Barmeier,
Graduate School of Mathematical Sciences,
The University of Tokyo,
3-8-1 Komaba, Meguro,
Tokyo, 153-8914 Japan}
\email{s.barmeier@googlemail.com}
\subjclass{}

\keywords{}

\thanks{The second and third authors acknowledge support of the
Royal Society and of the Glasgow Mathematical Journal.}

\begin{abstract}  We give infinitely many new isomorphisms between
  moduli spaces of
bundles on local surfaces and on local Calabi--Yau threefolds.
\end{abstract}

\maketitle
\tableofcontents


\section{Introduction}\label{intro}

To study moduli spaces
of rank 2 bundles on local surfaces and local threefolds we
present concrete descriptions of these moduli
as quotients of the vector spaces of extensions of line bundles by holomorphic
isomorphism.
Our favourite varieties are the following:

 $$Z_k:=\Tot({\mathcal O}_{\mathbb P^1}(-k))\qquad  \text{and} \qquad
W_i:=\Tot ({\mathcal O}_{\mathbb P^1}(i-2)\oplus {\mathcal O}_{\mathbb P^1}(-i))
\text{,}$$
 together with moduli of bundles on them.
Let $\ell$ denote the zero section of $Z_k $
and denote by $X_k$ the surface obtained from $Z_k$
by contracting $\ell$ to a point; thus $X_k$ is singular for $k>1$.
For a bundle $E$ on a surface $Z_k$, let $\ell$ denote the zero section of $\mathcal{O}_{\mathbb{P}^1}(-k)$ considered as a subvariety of $Z_k$, and $\pi \colon Z_k \to X_k$ the map that contracts $\ell$ to a point $x$.  Hence $\pi$ is the inverse of blowing up $x$.  In the following, we shall also let $Y$ denote either $W_i$ or $Z_k$.

\begin{definition}
The {\it charge} of a bundle $E\to Y$ around $\ell$ is the {\it local holomorphic Euler characteristic} of $\pi_*E$ at $x$, defined as
\begin{equation}\label{eq.euler}
  \chi\bigl(x, \pi_*E\bigr) := \chi\bigl(\ell, E\bigr)
  := h^0\bigl(X; \; (\pi_*E)^{\vee\vee} \bigl/ \pi_* E \bigr)
  + \sum_{i=1}^{n-1}(-1)^{i-1} h^0\bigl(X; \; R^i \pi_* E\bigr) \text{ .}
\end{equation}
\end{definition}

\noindent Note that we have only $\chi\bigl(\ell, E\bigr) = h^0\bigl(X; \; (\pi_*E)^{\vee\vee}\bigl/ \pi_* E \bigr) + h^0\bigl(X; \; R^1 \pi_* E\bigr)$ since our spaces only have two coordinate charts (see \ref{can2}).

\begin{definition} \label{definicion} Let $\sim$ denote bundle isomorphism and introduce the following notation and definitions.
	\begin{enumerate}
		\item $ \mathcal M_{j_1,j_2}(Y) :=
 \Ext^1(\mathcal O_Y(j_2),\mathcal O_Y(j_1))\bigm/\sim $
		\item  ${\mathcal M}_j(Y, 0) := {\mathcal M}_{j, -j}(Y)$
		\item  ${\mathcal M}_j(Y, 1) := {\mathcal M}_{j+1, -j}(Y)$
	\end{enumerate}

\noindent Note that the second entry, that is either $1$ or $0$, denotes the {\it first Chern class} of the bundles considered in each case.

\noindent From such quotients we extract the following moduli spaces. Let
$\epsilon = 0 $ or $1$.
	
	\begin{enumerate}
		\item ${\mathfrak M}^1_j(Y, \epsilon) \subset {\mathcal M}_j(Y, \epsilon)$ consisting of elements given by an extension class vanishing to order exactly 1 over $\ell$,
		\item ${\mathfrak M}^s_j(Y, \epsilon) \subset {\mathfrak M}^1_j(Y, \epsilon)$ consisting of elements with lowest charge $\chi_{\rm low}$,
 where
 $\chi_{\rm low}:= \inf\{\chi(E) \vert E \in {\mathfrak M}^1_j(Y, \epsilon)\}$.
	\end{enumerate}
\end{definition}

\begin{remark}
For $W_1$, it follows by lemma \ref{alg} that all rank $2$ bundles are extensions of line bundles.  In fact, we also have this filtrability for $W_2$ but not for $W_i$ with $i\geq 3$.
\end{remark}

Our main results are the following:
\vspace{5mm}

\noindent{\bf Theorem }{\it  (Coincidence of moduli of bundles on surfaces and threefolds)}

For all positive integers $i,j,k$, there are isomorphisms
$$\mathfrak M^1_{2j+\left\lfloor\frac{k-3}{2}\right\rfloor+\delta}(Z_k,\epsilon) \simeq \mathfrak M^1_j(W_i, \delta)$$
and birational equivalences
$$\mathfrak M^s_{2j+\left\lfloor\frac{k-3}{2}\right\rfloor+\delta}(Z_k, \epsilon) \dashrightarrow
\mathfrak M^s_j(W_1, \delta)$$
when $\epsilon\equiv k+1\,{\rm mod}\,2$ and $\delta \in \{0,1\}$.
\vspace{5mm}

\noindent{\bf Theorem }{\it  (Atiyah--Jones type statement for local moduli)}

For $q \leq 2(2j - k -2+\delta)$ there are isomorphisms
	\begin{itemize}
		\item[($\iota$)] $H_q(\mathfrak M^1_j(Z_k), \delta) = H_q(\mathfrak M^1_{j+1}(Z_k), \delta)$
		\item[($\iota\iota$)] $\pi_q(\mathfrak M^1_j(Z_k), \delta)= \pi_q(\mathfrak M^1_{j+1}(Z_k), \delta)\text{.}$
	\end{itemize}
	and for $q \leq 2(4j - 3-2\delta)$ there are isomorphisms
	\begin{itemize}
		\item[($\iota\iota\iota$)] $H_q(\mathfrak M^1_j(W_i), \delta) = H_q(\mathfrak M^1_{j+1}(W_i), \delta)$
		\item[($\iota\nu$)] $\pi_q(\mathfrak M^1_j(W_i), \delta)= \pi_q(\mathfrak M^1_{j+1}(W_i), \delta)\text{.}$
	\end{itemize}
\vspace{5mm}

\begin{remark}
We obtain isomorphisms between bundles $E$ and $F$ over $Z_k$ with $c_1(F)=c_1(E)+2$ by tensoring with $\mathcal O(-1)$, as
\[
\left(\begin{matrix}
z^{-j_1} & p \\ 0 & z^{-j_2}
\end{matrix}\right)\otimes z =
\left(\begin{matrix}
z^{-j_1+1} & zp \\ 0 & z^{-j_2+1}
\end{matrix}\right)
\]
so that we could consider $\epsilon\in\mathbb Z$, as long as $\epsilon\equiv k+1\,\mathrm{mod}\,2$ still holds.

\end{remark}

\section{Filtrability and algebraicity}\label{sec.alg}

We deal with  bundles on local surfaces and threefolds,
that is, a neighborhood of  a curve $C$ embedded
in a smooth surface or threefold $W$,
typically the total space of a vector bundle $N$ over $C$.
We  focus on the case when
$C \simeq \mathbb P^1$. In the $2$-dimensional case we
focus on the case when $N^*$ is ample, and in the $3$-dimensional
case we focus on  Calabi--Yau threefolds.

Let $W$ be a connected complex manifold (or smooth algebraic variety)
and $C$ a curve contained in $W$ that is reduced, connected and
a local complete intersection. Let $\widehat C$ denote the formal
completion of $C$ in $W$. Ampleness of the conormal bundle
 has a strong influence on the
behaviour of bundles on $\widehat C$.
 We will use the following basic fact from formal geometry.
\vspace{3mm}

\begin{lemma}\label{alg}\cite[thm.~3.2]{BGK2}
If the conormal bundle $N^*_C$ is ample, then every vector
bundle on $\widehat C$ is filtrable. If in addition $C$ is smooth,
then every holomorphic bundle on $\widehat C$  is algebraic.
\end{lemma}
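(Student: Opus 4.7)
The plan is to prove filtrability by induction on rank, so it suffices to produce a rank one subbundle $L \subset E$ on $\widehat C$; the quotient $E/L$ will again be a vector bundle on $\widehat C$, and induction on rank finishes the filtration. To build the line subbundle I would lift through the infinitesimal thickenings $C_n \subset \widehat C$ cut out by $\mathcal I_C^{n+1}$, which are linked by the short exact sequences
\[
0 \to S^{n+1} N^*_C \to \mathcal O_{C_{n+1}} \to \mathcal O_{C_n} \to 0 \text{.}
\]
On the reduced curve $C \simeq \mathbb P^1$, Grothendieck's splitting theorem writes $E|_C \simeq \bigoplus_i \mathcal O_{\mathbb P^1}(a_i)$, and I would take $L_0 \subset E|_C$ to be the summand of maximal degree.

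Next I would lift $L_0$ inductively to line subbundles $L_n \hookrightarrow E|_{C_n}$. Tensoring the thickening sequence with $\SHom(L_0, E|_C / L_0)$ exhibits the obstruction to extending $L_n$ to $L_{n+1}$ as an element of
\[
H^1\bigl(C,\; \SHom(L_0, E|_C / L_0) \otimes S^{n+1} N^*_C\bigr)\text{,}
\]
while the ambiguity of any particular lift is parameterized by the corresponding $H^0$. Ampleness of $N^*_C$ makes $S^{n+1} N^*_C$ very positive on $\mathbb P^1$ for $n$ large, so this $H^1$ vanishes by Serre vanishing; choosing $L_0$ of maximal degree also ensures the degree of $\SHom(L_0, E|_C/L_0)$ is as large as possible, which handles the finitely many small $n$ as well. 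Passing to the inverse limit of the $L_n$ produces a line subbundle $L \subset E$ on $\widehat C$, completing the rank-one step.

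For the algebraicity statement, once $C$ is smooth (hence projective, being $\mathbb P^1$), I would invoke Grothendieck's formal existence theorem (EGA III): ampleness of $N^*_C$ implies that every coherent sheaf on $\widehat C$ is the formal completion of a coherent algebraic sheaf on some Zariski neighborhood of $C$, so in particular every formal holomorphic bundle algebraizes. The main obstacle I foresee is the detailed obstruction analysis behind filtrability: one must verify that choosing $L_0$ as the maximal-degree summand of $E|_C$ really does kill \emph{all} of the relevant $H^1$ groups, not just those with $n \gg 0$, since a single failure breaks the inverse-limit construction. This reduces to a concrete positivity estimate on $\mathbb P^1$ combining $\deg(E|_C/L_0) - \deg(L_0)$ with the degrees appearing in $S^{n+1} N^*_C$, and is the technical heart of the lemma.
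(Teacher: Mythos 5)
The paper offers no proof of this lemma --- it is quoted verbatim from \cite[thm.~3.2]{BGK2} --- so there is nothing internal to compare against; judged on its own, your outline has the right architecture (lift a line subbundle through the thickenings $C_n$, kill the obstructions in $H^1$ using ampleness of $N^*_C$, algebraize at the end), but the key choice is made backwards, and this is fatal. Taking $L_0$ to be the summand of \emph{maximal} degree makes $\SHom(L_0, E|_C/L_0) \simeq \bigoplus_i \mathcal O(a_i - a_{\max})$ a sum of line bundles of degree $\leq 0$, possibly very negative --- the opposite of ``as large as possible'' --- so the obstruction groups $H^1\bigl(C,\, \SHom(L_0,E|_C/L_0)\otimes S^{n+1}N^*_C\bigr)$ do \emph{not} vanish for small $n$ when the splitting type is unbalanced, and they genuinely obstruct rather than being a technicality to ``handle'': on $Z_k$ a nonsplit bundle $E$ with $E|_\ell \simeq \mathcal O(j)\oplus\mathcal O(-j)$ admits no sub-line-bundle restricting to $\mathcal O(j)$ with locally free quotient, because every graded piece $H^1(\mathbb P^1, \mathcal O(2j+nk))$ of $\Ext^1_{\widehat\ell}(\mathcal O(-j),\mathcal O(j))$ vanishes, so such a subbundle would force $E$ to split. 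A single nonvanishing obstruction breaks the inverse-limit construction; there is no rescue for the finitely many small $n$.

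The fix is to lift the summand of \emph{minimal} degree (equivalently, the quotient line bundle of maximal degree): then $\SHom(L_0, E|_C/L_0)$ has all summands of degree $\geq 0$, each summand of $S^{n+1}N^*_C$ has degree $\geq n+1 \geq 1$, and \emph{every} obstruction group vanishes on $\mathbb P^1$ --- no large-$n$ asymptotics are needed. This is consistent with the paper's normalization in (\ref{eq!ext}), where the sub-line-bundle is $\mathcal O(j_1)$ with $j_1 \leq j_2$. Two further caveats: the lemma is stated for an arbitrary reduced connected lci curve $C$, so Grothendieck splitting is not available in general and one must instead produce a sub-line-bundle of sufficiently negative degree; and the algebraization step needs more than ``$C$ is projective'' --- the relevant existence theorem is the one for formal completions along curves with ample conormal bundle (contractibility in the sense of Grauert/Artin), not EGA III over a complete local base, though ampleness of $N^*_C$ is indeed the operative hypothesis there.
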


\begin{remark} Ampleness of $N^*_C$ is essential. For example, consider
the Calabi--Yau threefold
$$W_i = \Tot ({\mathcal O}_{\mathbb P^1}(i-2)\oplus {\mathcal O}_{\mathbb P^1}(-i))
\text{.}$$
Then $W_1$ satisfies the hypothesis of \ref{alg}, hence holomorphic bundles
on $W_1$ are filtrable and algebraic, whereas on $W_2$ filtrability still
holds, but there exist proper
holomorphic bundles $W_2$ that are not algebraic, and on $W_i$ for $i\geq 3$ neither
filtrability nor algebraicity hold, see \cite{K} chapter 3.3.
\end{remark}

\section{Surfaces}\label{2}

 We use the very concrete description of moduli spaces
of rank 2 bundles over  the surfaces $Z_k :=
\Tot (\mathcal O_{{\mathbb P}^1}(-k))$ given in \cite{BGK1}.
Let $\ell$ denote the zero section inside $Z_k$. Given a bundle
$E$ over $Z_k$, its restriction to $\ell$ splits by Grothendieck's
principle, and if $E\vert_\ell \simeq {\mathcal O}(a_1) \oplus \cdots \oplus
{\mathcal  O}(a_r)$ then $(a_1, \dots, a_r)$ is called the {\it  splitting type}
 of $E$.
By \cite[thm.~3.3]{CA1}, a holomorphic
bundle $E$ over $Z_k$ having
splitting type $(j_1,j_2)$ with $j_1 \leq j_2$
can be written as  an algebraic extension

\begin{equation}\label{eq!ext}
  0 \longrightarrow \mathcal O(j_1) \longrightarrow E \longrightarrow \mathcal O(j_2) \longrightarrow 0
\end{equation}
 and therefore corresponds to an {\it extension class}
\[  p  \in \Ext^1_{Z_k}\!(\mathcal O(j_2),\; \mathcal
O(j_1)) .\]

We fix once and for all
coordinate charts on our surfaces $Z_k = U \cup V$, where
\begin{equation}\label{can2}
  U = \mathbb C^2_{z,u} = \{(z,u) \in \mathbb C^2\} \qquad\text{and}\qquad
  V = \mathbb C^2_{\xi,v} = \{(\xi, v) \in \mathbb C^2\} \end{equation}
and$$ (\xi, v) = (z^{-1}, z^ku)  \,\,\,\ \mbox{on}
\,\,\,\ U \cap V.$$
In these coordinates, the bundle $E$
may be represented by a transition matrix in   {\it canonical form} as
\[
T =
\left(\begin{matrix}{z^{-j_1}}& {p}\\ {0}& {z^{-j_2}}\end{matrix}
\right)
\]
where

\begin{equation}\label{poly}
  p =\hspace{-0.2cm}\sum_{i=1}^{\lfloor \vphantom{\textstyle X}(j_2 - j_1 - 2)/k \rfloor}\hspace{-0.25cm}
  \sum_{l = ki + j_1 + 1}^{j_2 - 1} p_{il} \, z^l u^i \text{ .}
\end{equation}
Since we are interested in isomorphism classes of vector bundles
rather than extension classes, we use the following moduli:

\[  \mathcal M_{j_1,j_2}(Z_k) =
 \Ext^1(\mathcal O_{Z_k}(j_2),\mathcal O_{Z_k}(j_1))\bigm/\sim  \]
where $\sim $ denotes bundle isomorphism. We observe that this
quotient gives rise to a moduli stack, but we will only describe here
subsets of its coarse moduli space considered as a variety.
Considered just as a topological space, the full quotient
will not be Hausdorff except in the trivial case, when
it contains only a point. The latter happens when
the only bundle with splitting type $(j_1,j_2)$ is
$ \mathcal O_{Z_k}  (j_1)\oplus\mathcal O_{Z_k}  (j_2)$, that is, whenever $j_2-j_1 <k+2$.

To specify the topology  in this quotient space, we use the canonical form of the
extension class  (\ref{poly}). Then the
coefficients of $p$ written in lexicographical order form
a vector in  $\mathbb C^m$, where $m$  is the number of complex coefficients
appearing in the expression of $p$.
We define an  equivalence relation in $\mathbb C^m$ by setting
$p \sim p'$ if  $(j_1,j_2,p)$ and $(j_1,j_2,p')$
define isomorphic bundles
over $Z_k$, and give $\mathbb C^m\bigl/\sim$ the quotient topology.
Now setting
$n := \lfloor (j_2-j_1-2)/k \rfloor$,
we obtain a bijection
\begin{eqnarray*}
  \phi \colon \mathcal M_{j_1,j_2}(Z_k) &\to& \mathbb C^m\bigl/\sim \text{ ,} \\
   \begin{pmatrix} z^{-j_1} & p \\ 0 & z^{-j_2}\end{pmatrix}
   &\mapsto& \bigl(p_{1,k+j_1+1}, \dotsc, p_{n,j_2-1}\bigr)
\end{eqnarray*}
and give $\mathcal M_{j_1,j_2}(Z_k)$
 the topology induced by this bijection.

Now observe that it is always the case that $p \sim \lambda p $ for
any $\lambda \in \mathbb C - \{0\}$. The
 moduli space is then evidently non-Hausdorff,
  as the only open  neighborhood of
the split bundle is the entire moduli space. In the spirit of GIT
one would like to extract nice moduli spaces out of these quotient
spaces. Clearly the split bundle needs to be removed, but there is
quite  a bit more topological complexity.

 \subsection{Vanishing $c_1$ case: moduli spaces}\label{2.0}
For rank 2 bundles $E$ over $Z_k$ with $c_1(E)=0$ there is a non-negative integer $j$
such that $E \vert_\ell \simeq {\mathcal O}(j) \oplus {\mathcal O}(-j)$
and we will say $E$ has splitting type $j$.
We denote by $\mathcal M_j$ the moduli  of all bundles with
this fixed splitting type (see Definition \ref{definicion}, item (2)):

\[ \mathcal M_j(Z_k,0) := \Ext^1(\mathcal O_{Z_k}(-j),\mathcal O_{Z_k}(j))\bigm/\sim  \text{.}\]

We now recall some results about the topological structure of
these spaces and their relation to instantons.
These moduli spaces are stratified into Hausdorff components
by local analytic invariants. Given a reflexive sheaf $E$ over $Z_k$ we set:
\[{\bf w}_k(E):= h^0((\pi_*E)^{\vee\vee}/\pi_*E),\qquad
{\bf h}_k(E):=h^0(R^1\pi_* E)\text{.}\]
called the {\it width} and {\it height} or $E$, respectively.
\begin{definition}
 $\chi(\ell,E) := {\bf w}_k(E)+{\bf h}_k(E)$ is called the
{\it local holomorphic
Euler characteristic} or {\it charge} of $E$.
\end{definition}

We quote the following results to show the connection with mathematical physics

\begin{theorem}\cite[cor.~5.5]{BGK1} {\it  Correspondence with
instantons.}
An $\mathfrak{sl}(2,\mathbb{C})$-bundle
over $Z_k$ represents an instanton if and only if its splitting type
is a multiple of $k.$
\end{theorem}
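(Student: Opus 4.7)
The plan is to reduce the statement to a framing computation on a compactification. Compactifying $Z_k$ by adding the section at infinity $\ell_\infty$ produces the Hirzebruch surface $\mathbb{F}_k = \mathbb{P}(\mathcal{O}_{\mathbb{P}^1} \oplus \mathcal{O}_{\mathbb{P}^1}(-k))$, in which $\ell^2 = -k$, $\ell_\infty^2 = +k$, $\ell \cdot \ell_\infty = 0$, and a direct intersection-number computation yields the linear equivalence $\ell_\infty \equiv \ell + k f$ in $\Pic(\mathbb{F}_k)$, where $f$ is the fiber class. A Donaldson--Uhlenbeck--Yau / Bando--Siu type argument, adapted to the asymptotically conical K\"ahler manifold $Z_k$, would identify $\mathfrak{sl}(2,\mathbb{C})$-instantons on $Z_k$ with rank $2$ holomorphic bundles $E$ on $\mathbb{F}_k$ having $c_1(E) = 0$ and holomorphically trivial restriction $E|_{\ell_\infty}$, together with a framing.

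With this correspondence in hand, the next step is to relate the splitting types of $E|_\ell$ and $E|_{\ell_\infty}$. I would first test the split case: if $E = \mathcal{O}(a \ell + b f) \oplus \mathcal{O}(-a \ell - b f)$, then $E|_\ell = \mathcal{O}(b - ak) \oplus \mathcal{O}(ak - b)$ while $E|_{\ell_\infty} = \mathcal{O}(b) \oplus \mathcal{O}(-b)$, so the instanton condition forces $b = 0$ and the splitting type $j = |ak|$ on $\ell$ is automatically a multiple of $k$. For the general, non-split case I would work directly with the canonical transition matrix
\[
T = \begin{pmatrix} z^{-j} & p(z,u) \\ 0 & z^{j} \end{pmatrix}
\]
coming from (\ref{poly}), pass to the coordinates $(z, w = 1/u)$ and $(\xi, w' = 1/v)$ near $\ell_\infty$, and perform gauge transformations on each chart that absorb the $u$-powers of $p$ into diagonal factors. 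Restricting the resulting transition datum to $\{w = w' = 0\}$ gives a transition on $\ell_\infty \cong \mathbb{P}^1$ conjugate to $\mathrm{diag}(z^{a}, z^{-a})$; tracking the exponents through the cocycle $(\xi, w') = (z^{-1}, z^{-k} w)$ shows that $a = 0$, and hence $E|_{\ell_\infty}$ is trivial, precisely when $k \mid j$.

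The last step is the converse direction: given $j = mk$, exhibit explicit local gauges $g_U, g_V$ that extend $E$ across $\ell_\infty$ with trivial restriction there. This should be possible because the $u$-degree range $1 \le i \le \lfloor (2j - 2)/k \rfloor$ in (\ref{poly}), combined with the divisibility $k \mid j$, provides exactly the room needed to cancel the contributions of $p$ to the restriction at $\ell_\infty$ against diagonal twists by powers of $w$.

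I expect the main obstacle to be the initial analytic step: making the Donaldson--Uhlenbeck--Yau correspondence precise on the noncompact $Z_k$ requires a version of Hitchin--Kobayashi for asymptotically locally Euclidean (or more generally asymptotically conical) K\"ahler surfaces, rather than the closed-surface case that is classical. Once this framing correspondence is granted, the remaining content of the theorem reduces to the algebraic manipulation of the canonical transition matrix above, which is routine.
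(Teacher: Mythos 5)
The paper does not prove this statement: it is quoted verbatim from \cite[cor.~5.5]{BGK1} (see also \cite{GKM}), so there is no in-paper argument to compare yours against; I will measure your proposal against the argument in those sources. Your overall strategy --- identify instantons with framed holomorphic bundles via a Hitchin--Kobayashi-type correspondence, then show that the obstruction to triviality at infinity is exactly $j \bmod k$ --- is the right one, and your intersection-theoretic bookkeeping on $\mathbb{F}_k$ is correct. But both halves have genuine gaps. First, the analytic correspondence you assume is not the one that is available: for $k \ge 2$ the end of $Z_k$ is a cone over the lens space $S^3/(\mathbb{Z}/k)$, and the correspondence actually used (of Kronheimer--Nakajima/King type, as in \cite{GKM}) compactifies $Z_k$ by adding a single orbifold point, not the $+k$-curve $\ell_\infty \subset \mathbb{F}_k$. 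Translating ``instanton with trivial holonomy at infinity'' into ``bundle on $\mathbb{F}_k$ trivial on $\ell_\infty$ with framing'' is itself a nontrivial claim you would have to prove; it is not a routine adaptation of Donaldson--Uhlenbeck--Yau, and since the equivalence to be proved lives entirely inside this step plus the $j \bmod k$ computation, it cannot be treated as a black box.

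Second, the algebraic step is not well posed as written: $E$ is only defined on $Z_k = \mathbb{F}_k \setminus \ell_\infty$, and its locally free extensions across $\ell_\infty$ are not unique --- any two differ by twists by $\mathcal{O}(m\ell_\infty)$ (which change the restriction to $\ell_\infty$ by $\mathcal{O}(mk)$) and by elementary modifications along $\ell_\infty$. So ``the restriction to $\ell_\infty$ is conjugate to $\mathrm{diag}(z^a,z^{-a})$ with $a=0$ iff $k \mid j$'' is not a statement about $E$ until you specify which extension you mean, and your gauge-transformation sketch computes one particular extension rather than an invariant of $E$. The correct invariant is the isomorphism class of $E$ restricted to $Z_k \setminus \ell \simeq (\mathbb{C}^2\setminus\{0\})/(\mathbb{Z}/k)$: there $\Pic \simeq \mathbb{Z}/k$, the class of $\mathcal{O}_{Z_k}(j)$ is $j \bmod k$ --- equivalently the representation $\mathrm{diag}(\zeta^j,\zeta^{-j})$ of $\mathbb{Z}/k$ on the fibre over the puncture --- and one checks that the extension class $p$ does not alter it. This is what makes ``trivializable near infinity $\iff k \mid j$'' an honest equivalence. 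Your converse paragraph (``this should be possible because \dots provides exactly the room needed'') is an assertion rather than an argument, and it too becomes immediate once the obstruction is identified as the class in $\Pic(Z_k\setminus\ell)\simeq\mathbb{Z}/k$.
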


\begin{theorem}\cite[thm.~4.15]{BGK1} {\it  Stratifications.}
If $j=nk$ for some $n \in \mathbb{N}$, then the pair $({\bf h}_k,{\bf w}_k)$
stratifies instanton moduli stacks $\mathcal M_j(k)$ into Hausdorff
components.
\end{theorem}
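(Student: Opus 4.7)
The plan is to combine the explicit description of $\mathcal M_j(Z_k,0)$ as a quotient of a finite-dimensional vector space of extension classes with semicontinuity of the invariants $\mathbf w_k$ and $\mathbf h_k$. Recall from \eqref{poly} that a bundle $E$ with splitting type $j$ is encoded by a polynomial $p$ whose coefficients form a point in $\mathbb C^m$, and that $\mathcal M_j(Z_k,0)$ is the quotient of this affine space by bundle isomorphism. The fundamental source of non-Hausdorff behaviour is the $\mathbb C^*$-rescaling $p \mapsto \lambda p$, which sends every orbit to the split bundle as $\lambda \to 0$; more generally a one-parameter degeneration can turn a non-split extension into a simpler one. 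The strategy is to show that any such degeneration strictly decreases the pair $(\mathbf h_k,\mathbf w_k)$, so that fixing these invariants produces Hausdorff strata.

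First I would verify upper semicontinuity of $\mathbf w_k$ and $\mathbf h_k$. The sheaves $(\pi_*E)^{\vee\vee}/\pi_*E$ and $R^1\pi_*E$ are coherent on $X_k$ with support at the singular point $x$; in any flat family $\{E_t\}$ of reflexive sheaves the fibrewise values of $h^0$ of these sheaves jump up under specialisation by the standard semicontinuity theorem. Consequently each locus
\[ S_{w,h} \ce \{[E] \in \mathcal M_j(Z_k,0) \mid \mathbf w_k(E) = w,\ \mathbf h_k(E) = h\} \]
is locally closed, and the finite collection $\{S_{w,h}\}$ stratifies the moduli.

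Second I would show that the quotient topology on each stratum $S_{w,h}$ is Hausdorff. Suppose $E_t \in S_{w,h}$ is a one-parameter family with $E_t \simeq E_1$ for $t \ne 0$ and limit $E_0 \in S_{w,h}$. The instanton condition $j = nk$ ensures that in the canonical form \eqref{poly} there are distinguished coefficients which determine $\mathbf w_k$ and $\mathbf h_k$ via explicit rank computations, and that the group of bundle isomorphisms acts on $\mathbb C^m$ as an algebraic group $G$ with accessible orbit structure. One then checks on canonical forms that any specialisation between distinct $G$-orbits kills some leading terms and hence strictly drops either $\mathbf w_k$ or $\mathbf h_k$. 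Thus no orbit in $S_{w,h}$ contains another in its closure, and $S_{w,h}$ is Hausdorff.

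The principal obstacle is this orbit-closure analysis: one must verify that every nontrivial degeneration of the canonical form is already detected by $(\mathbf w_k,\mathbf h_k)$. The hypothesis $j = nk$ is essential here, since for arbitrary splitting types the leading coefficients form a less rigid pattern, $(\mathbf w_k,\mathbf h_k)$ alone would not separate all orbit closures, and additional invariants would be required.
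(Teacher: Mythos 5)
This statement is not proved in the paper at all: it is quoted verbatim from \cite[thm.~4.15]{BGK1} in a list of results recalled ``to show the connection with mathematical physics,'' so there is no internal proof to compare your argument against. Judged on its own terms, your proposal has the right two-step skeleton --- semicontinuity of $({\bf h}_k,{\bf w}_k)$ to get locally closed strata, then an orbit-closure analysis to get Hausdorffness of each stratum --- but the decisive step is exactly the one you defer. The sentence ``one then checks on canonical forms that any specialisation between distinct $G$-orbits kills some leading terms and hence strictly drops either ${\bf w}_k$ or ${\bf h}_k$'' \emph{is} the theorem, and nothing in the proposal establishes it. The remark following Theorem \ref{inf} warns that beyond the first infinitesimal neighbourhood ``there are further identifications and the quotient space is no longer Hausdorff,'' so the isomorphism action on the full coefficient space $\mathbb C^m$ of (\ref{poly}) is genuinely intricate; the content of \cite{BGK1} is precisely the explicit computation of ${\bf w}_k$ and ${\bf h}_k$ from the coefficients $p_{il}$ and the verification that these two numbers detect every nontrivial degeneration. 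Note also that knowing each orbit is closed inside its stratum does not by itself make the quotient Hausdorff: one must separate distinct orbits by saturated open sets, which again requires the explicit description of the invariants.

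There is also an internal inconsistency of direction. You first announce that a degeneration ``strictly decreases the pair $({\bf h}_k,{\bf w}_k)$'' and later, correctly, that the relevant $h^0$'s ``jump up under specialisation.'' The latter is right: the split bundle, which is the $\lambda\to 0$ limit of every orbit, has \emph{maximal} charge, and the paper records that the locus of charge above $\chi_{\rm min}$ is Zariski closed. The slip does not break the logical skeleton, but it signals that the semicontinuity step has not really been checked either; in particular $(\pi_*E)^{\vee\vee}/\pi_*E$ involves $\pi_*$ and a double dual, neither of which commutes with base change, so upper semicontinuity of ${\bf w}_k$ is not an instance of ``the standard semicontinuity theorem'' and needs its own argument. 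Finally, your account of where the instanton hypothesis $j=nk$ enters is speculative: as written, the proposal never actually uses it, whereas the paper emphasizes that the non-instanton case remains open, so any correct proof must invoke that hypothesis at a concrete point.
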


\begin{remark}\label{usu} Let us note the following:
\begin{itemize}
\item $\chi$ alone is not fine enough to
stratify the moduli spaces.
\item Constructing such a stratification
for the non-instanton case is an open problem.
\item There are various ways to obtain moduli spaces inside the
$\mathcal M_j$. One possible choice is to take the largest Hausdorff
component as our moduli space. This will produce compact moduli, and
we study this case in section
\ref{order1}. A second, more natural choice is to
fix some numerical invariant,
to which end the local holomorphic Euler characteristic
presents itself as the most natural candidate.
\end{itemize}
\end{remark}

\subsection{Vanishing $c_1$ case: first order deformations}\label{order1}
\begin{notation}
Let ${\mathfrak M}^1_j(Z_k,0) \subset {\mathcal M}_j(Z_k)$ denote the
subset which parametrizes  isomorphism classes of
bundles on $Z_k$ consisting of isomorphism classes
of nontrivial
first order deformations of ${\mathcal O}(j) \oplus {\mathcal O}(-j)$,
that is, bundles $E$ fitting into an exact sequence
\begin{equation}\label{filt}
0 \rightarrow {\mathcal O}(-j) \rightarrow E \rightarrow
{\mathcal O}(j) \rightarrow 0 \end{equation}
 whose corresponding extension class
vanishes to order exactly one on $\ell$ (note that this excludes the split
bundle itself). In other words, 
 ${\mathcal I}_\ell=\langle u\rangle$ on the $u$-chart and consider only
extensions $p \in \Ext({\mathcal O}(j),
{\mathcal O}(-j))$  with  $p= u q$ and $u \nmid q$.
\end{notation}

\begin{remark} If $2j-2<k$ then $\mathcal M_j(Z_k) $ consists of
just a point represented by the split bundle, consequently
if $2j-2<k$ then ${\mathfrak M}^1_j(Z_k,0) = \emptyset.$
\end{remark}

A simple observation, which we now describe, then implies that  ${\mathfrak M}^1_j(Z_k)$ is
compact and smooth.
\vspace{3mm}

\begin{theorem}\label{inf}\cite[thm.~4.9]{BGK1}\label{thm.proj}
On the first infinitesimal neighbourhood, two bundles $E^{(1)}$ and
$F^{(1)}$ with respective transition matrices
\[ \begin{pmatrix} z^j & p_1 \\ 0 & z^{-j} \end{pmatrix} \quad\text{and}\quad
   \begin{pmatrix} z^j & q_1 \\ 0 & z^{-j} \end{pmatrix} \]
are isomorphic if and only if $q_1 = \lambda p_1$ for some
$\lambda \in \mathbb{C}-\{0\}$.
\end{theorem}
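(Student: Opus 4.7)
The plan is to directly expand the bundle-isomorphism cocycle condition in powers of $u$ (with $u^2 = 0$ on the first infinitesimal neighborhood) and to extract the claim from a Laurent-range comparison against the canonical form (\ref{poly}).

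A bundle isomorphism $E^{(1)}\to F^{(1)}$ is realized by matrices $A(z,u)$ on $U$ and $B(\xi,v)$ on $V$, each holomorphic and invertible modulo the square-zero ideal, satisfying $B\,T_1 = T_2\,A$ on $U\cap V$ (once $B$ is rewritten via $(\xi,v) = (z^{-1}, z^k u)$). Expanding
\[ A = A_0(z) + u\,A_1(z), \qquad B = B_0(z^{-1}) + z^k u\,B_1(z^{-1}) \pmod{u^2}, \]
I compare both sides at orders $u^0$ and $u^1$.

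At order $u^0$ the equation reads $B_0\,\operatorname{diag}(z^j,z^{-j}) = \operatorname{diag}(z^j,z^{-j})\,A_0$. Using that the entries of $A_0$ are polynomials in $z$ (non-negative powers) while those of $B_0$ are polynomials in $z^{-1}$ (non-positive powers), matching entries forces $A_0$ to be lower triangular with constant diagonal $(\lambda,\mu)\in(\mathbb C^*)^2$ and a $(2,1)$-entry that is a polynomial in $z$ of degree at most $2j$. The heart of the argument is the $(1,2)$-entry at order $u^1$, which reduces to
\[ \lambda\,\tilde p - \mu\,\tilde q \;=\; z^{j}\,a'_{12}(z) \;-\; z^{k-j}\,b'_{12}(z^{-1}), \]
where $p_1 = u\tilde p$, $q_1 = u\tilde q$, and $a'_{12},\,b'_{12}$ denote the $(1,2)$-entries of $A_1,\,B_1$. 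The key observation is a Laurent-range comparison: by (\ref{poly}) the left-hand side is a polynomial in $z$ with powers only in $[k-j+1,\,j-1]$, whereas the right-hand side has powers in the disjoint set $\{\ell\geq j\}\cup\{\ell\leq k-j\}$. These two ranges are disjoint precisely when $2j-2\geq k$, which is the regime in which $\mathfrak M^1_j(Z_k,0)$ is non-empty. Hence both sides must vanish, forcing $\mu\,\tilde q = \lambda\,\tilde p$, i.e., $q_1 = (\lambda/\mu)\,p_1$ with $\lambda/\mu\in\mathbb C^*$.

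The converse is immediate: taking $A = B = \operatorname{diag}(\lambda_0, 1)$ for any $\lambda_0\in\mathbb C^*$, both matrices are constant, hence holomorphic on both charts, and a direct $2\times 2$ computation shows $B\,T_1 = T_2\,A$ iff $q_1 = \lambda_0\,p_1$. The only subtle bookkeeping is at order $u^0$, where the lower-triangular $A_0$ pairs with a lower-triangular $B_0$ via the relation $\alpha(z) = z^{2j}\beta(z^{-1})$ between their $(2,1)$-entries; I expect to verify that this off-diagonal freedom does not contaminate the $(1,2)$-equation at order $u^1$, which it does not, because the $(1,2)$-entries of $T_2^{(1)}A_0$ and $B_0 T_1^{(1)}$ see only the diagonal constants $\mu$ and $\lambda$ respectively.
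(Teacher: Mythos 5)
The paper does not actually prove this statement --- it is quoted verbatim from \cite[thm.~4.9]{BGK1}. Your reconstruction is correct and is essentially the argument used there: expand the cocycle condition $B\,T_1 = T_2\,A$ to first order in $u$, use the order-$u^0$ equation to force $A_0$ lower triangular with constant diagonal $(\lambda,\mu)\in(\mathbb C^*)^2$, and then observe that in the $(1,2)$-entry at order $u^1$ the support $[k-j+1,\,j-1]$ of $\lambda\tilde p-\mu\tilde q$ is disjoint from the Laurent range $\{\ell\geq j\}\cup\{\ell\leq k-j\}$ of $z^{j}a'_{12}-z^{k-j}b'_{12}$, so both sides vanish and $q_1=(\lambda/\mu)p_1$. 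The only cosmetic imprecision is calling the entries of $A_0$, $B_0$ polynomials rather than convergent power series in $z$ (resp.\ $z^{-1}$), which does not affect the Laurent-coefficient comparison.
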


\begin{remark}
Note that no similar result holds true if we include  higher order
deformations, because then there are further identifications and
the quotient space is no longer Hausdorff.
\end{remark}

\begin{corollary}\label{cor}
 ${\mathfrak M}^1_j(Z_k,0) \simeq \mathbb P^{2j-k-2}\text{.}$
\end{corollary}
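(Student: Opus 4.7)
The target space is the projectivization of the first–order (in $u$) part of the canonical form \eqref{poly}. Specializing \eqref{poly} to $(j_1,j_2)=(-j,j)$, the $u^1$--coefficient is a polynomial $p_1(z)=\sum_{l=k-j+1}^{j-1} p_{1,l}\,z^l$, which ranges over a $(2j-k-1)$--dimensional vector space $V$. The condition that the extension class vanishes to order exactly one on $\ell$ says precisely $p_1\neq 0$. The goal is therefore to construct a homeomorphism
\[
\Phi\colon \mathfrak{M}^1_j(Z_k,0)\ \longrightarrow\ \mathbb{P}(V)\cong\mathbb{P}^{2j-k-2},\qquad [E]\longmapsto [p_1].
\]

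I would first verify well--definedness by restricting an isomorphism of bundles on $Z_k$ to the first infinitesimal neighbourhood of $\ell$: the induced bundles on $\widehat{\ell}^{(1)}$ remain isomorphic, so Theorem \ref{inf} applies and yields $q_1=\lambda p_1$ for some $\lambda\in\mathbb{C}^{\times}$, whence $[p_1]=[q_1]$. Surjectivity is immediate: for any nonzero $p_1\in V$, the matrix $\left(\begin{smallmatrix} z^j & u\,p_1(z) \\ 0 & z^{-j} \end{smallmatrix}\right)$ is already in canonical form and defines a bundle in $\mathfrak{M}^1_j(Z_k,0)$ whose image under $\Phi$ is $[p_1]$.

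The crucial step is injectivity, which lifts Theorem \ref{inf} from the first infinitesimal neighbourhood to the full space $Z_k$. Given $[E],[E']$ with $[p_1]=[q_1]$, after rescaling I may assume $p_1=q_1$, so $p-q$ is divisible by $u^{2}$. The plan is to absorb $p-q$ by an explicit change of trivialisation, built inductively on the $u$--order of the discrepancy: at each stage the lowest nonzero term is a polynomial $u^{N}r(z)$ ($N\geq 2$) which is to be killed by a coboundary of the form $u^{N-1}\cdot(\text{holomorphic matrix})$ on $U\cap V$. The cocycle equation at each step reduces, modulo higher order, to a linear equation over $\mathbb{C}[z,z^{-1}]$ whose solvability hinges on the nonvanishing of $p_{1}$ -- this is precisely why the argument requires restricting to $\mathfrak{M}^1_j$ and is the only point at which the hypothesis $p_1\neq 0$ is used. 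The remark following Theorem \ref{inf}, to the effect that higher--order deformations produce further non--Hausdorff identifications, indicates that exactly such absorptions are expected.

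I expect this inductive absorption step to be the main obstacle; once it is established, $\Phi$ is a continuous bijection of the quotient $\mathbb{C}^m/{\sim}$ topology on $\mathfrak{M}^1_j(Z_k,0)$ with the standard topology on $\mathbb{P}^{2j-k-2}$, and the section $[p_1]\mapsto [E_{p_1}]$ constructed in the surjectivity step provides a continuous inverse, yielding the asserted isomorphism $\mathfrak{M}^1_j(Z_k,0)\simeq \mathbb{P}^{2j-k-2}$ and in particular the compactness and smoothness advertised just before the statement.
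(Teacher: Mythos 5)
The paper offers no written proof of this corollary: it is presented as an immediate consequence of Theorem \ref{inf} together with the coefficient count in \eqref{poly} (the same count is spelled out in the proof of Corollary \ref{quasi}): for $(j_1,j_2)=(-j,j)$ the first-order part $p_1$ has $2j-k-1$ coefficients, Theorem \ref{inf} says two first-order deformations are isomorphic iff these coefficient vectors are proportional, and projectivising gives $\mathbb P^{2j-k-2}$. Your map $\Phi$, the well-definedness argument via Theorem \ref{inf}, and the surjectivity argument coincide with this. The divergence is your ``crucial'' injectivity step, where you try to prove something strictly stronger than what the paper uses: that a bundle on all of $Z_k$ whose canonical extension class is $p=up_1+u^2p_2+\cdots$ with $p_1\neq 0$ is isomorphic to the one with class $up_1$, by inductively absorbing the higher-order terms.

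That step is a genuine gap, for two reasons. First, it is only a plan: the solvability of the cocycle equation at each stage is asserted to ``hinge on the nonvanishing of $p_1$'' but never verified, and you yourself flag it as the main obstacle without resolving it. Second, the sketch as written cannot work in the form suggested: coboundaries of the shape $u^{N-1}\cdot(\text{holomorphic matrix})$ that preserve the extension \eqref{filt} (i.e.\ upper-triangular ones) change $p$ only by terms of the form $z^{j}b-az^{-j}$, and the canonical form \eqref{poly} is by construction already fully reduced modulo exactly these; so the terms $u^ip_i$ with $i\geq 2$ appearing in \eqref{poly} cannot be killed this way, and any absorption would have to use isomorphisms that do not respect the filtration --- which is precisely where the nontrivial content lies and where your proposal says nothing. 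The paper's own Remark following Theorem \ref{inf} warns that admitting higher-order deformations produces further identifications and a non-Hausdorff quotient, so the classification of genuine bundles on $Z_k$ by $[p_1]$ alone should not be assumed. The intended (and sufficient) reading is that ${\mathfrak M}^1_j(Z_k,0)$ parametrises nontrivial \emph{first order} deformations of $\mathcal O(j)\oplus\mathcal O(-j)$, i.e.\ the data of $p_1$ alone, for which Theorem \ref{inf} plus the count above already gives the result with no absorption argument needed.
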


\subsection{Vanishing $c_1$ case: minimal charge}\label{chimin}

Another possible choice of moduli space, more compatible with the physics
motivation, is to
 consider the subset of bundles on ${\mathfrak M}^1_j(Z_k,0)$
 having fixed  charge; this is preferable, because the
charge is an analytic  invariant on the bundles, and minimal
charge corresponds to a generic choice for the corresponding
instanton interpretation. In this case we take the open subset
of the moduli of first order deformations  defined by:
  $${\mathfrak M}^s_j(Z_k,0):=
\{E   \in {\mathfrak M}^1_j(Z_k,0) :  \chi(E) = \chi_{\rm min}(Z_k)\}\text{.}$$
Charge is lower semi-continuous on the splitting type,
and we have that the locus of bundles with charge higher than $\chi_{\rm min}$
is Zariski closed; in fact, such locus
is determined by $k+1$ polynomial equations \cite[thm.~4.11]{BGK1}.

\begin{corollary}\label{quasi}
 ${\mathfrak M}^s_j(Z_k,0) $ is a quasi-projective
variety, whose complement in $\mathbb P^{2j-k-2}$ is cut out by
$k+1$ equations.

\begin{proof}
On the first infinitesimal neighbourhood $p_1$ has $2j-k-1$ coefficients modulo projectivisation (see equation \ref{poly}) and then, by means of Theorem \ref{inf}, we arrive at the desired result.
\end{proof}

\end{corollary}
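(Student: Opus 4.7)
The plan is to combine the identification $\mathfrak{M}^1_j(Z_k,0)\simeq \mathbb{P}^{2j-k-2}$ from Corollary \ref{cor} with the lower semi-continuity of the charge and the structural result cited from \cite[thm.~4.11]{BGK1}. First I would count coefficients explicitly: restricting formula (\ref{poly}) to $j_1=-j$, $j_2=j$ and to the first infinitesimal neighbourhood $\{u^1\}$ (forced by the definition of $\mathfrak{M}^1_j(Z_k,0)$, namely $p=uq$ with $u\nmid q$), the monomials $z^l u$ that survive run over $l = k-j+1,\dotsc,j-1$, giving exactly $2j-k-1$ free complex coefficients.

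Next I would invoke Theorem \ref{inf}: two bundles with transition matrices differing only in their first-order term $p_1$ are isomorphic precisely when these first-order terms are scalar multiples of one another. Hence the equivalence relation $\sim$ on the $\mathbb{C}^{2j-k-1}$ of coefficients restricted to $\mathfrak{M}^1_j(Z_k,0)$ is just scaling by $\mathbb{C}^{\times}$, reproducing the isomorphism $\mathfrak{M}^1_j(Z_k,0)\simeq\mathbb{P}^{2j-k-2}$ (this is really the content of Corollary \ref{cor}, which I would simply quote).

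Finally, $\mathfrak{M}^s_j(Z_k,0)$ is by definition the locus inside $\mathfrak{M}^1_j(Z_k,0)$ where $\chi(E)=\chi_{\min}(Z_k)$. Since charge is lower semi-continuous in the splitting type, the complementary locus $\{\chi(E)>\chi_{\min}\}$ is Zariski closed; by \cite[thm.~4.11]{BGK1} it is carved out by precisely $k+1$ polynomial conditions on the coefficients of $p_1$. These conditions are homogeneous in the $p_{1l}$ (the charge is invariant under the scaling action of Theorem \ref{inf}), so they descend to $k+1$ polynomial equations on $\mathbb{P}^{2j-k-2}$, and $\mathfrak{M}^s_j(Z_k,0)$ is the open complement, hence quasi-projective.

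The only non-routine point — and the place where one could slip — is checking that the $k+1$ equations of \cite[thm.~4.11]{BGK1} are homogeneous of the correct weight so that they really descend from $\mathbb{C}^{2j-k-1}\setminus\{0\}$ to the projective quotient; this is where I would spend the most care, but it follows from the scaling-equivariance of the charge already implicit in Theorem \ref{inf}. Everything else is bookkeeping.
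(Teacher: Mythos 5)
Your proposal is correct and follows essentially the same route as the paper: count the $2j-k-1$ first-order coefficients from equation (\ref{poly}), projectivise via Theorem \ref{inf} to recover $\mathbb{P}^{2j-k-2}$, and cut out the higher-charge locus by the $k+1$ equations of \cite[thm.~4.11]{BGK1} using lower semi-continuity of the charge. Your added check that these equations are scaling-invariant and hence descend to the projective quotient is a point the paper leaves implicit, but it is the same argument.
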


\subsection{Case $c_1=1$}

From expression (\ref{poly}) we can read off the case $c_1=1$ by setting $j_1=-j$ and $j_2=j+1$,
considering extensions ${\rm Ext}^1_{Z_k}(\mathcal O(j+1),\mathcal O(-j))$.
The form of the extension class restricted to the first infinitesimal neighborhood expressed  in canonical coordinates is
$$ \sum_{l = k - j + 1}^{j} p_{1l} \, z^l u.$$
The coefficients vary in $\mathbb C^{2j-k}$, so that modulo the relation $p\sim \lambda p'$
we have:
\begin{lemma}\label{z1}
$\mathfrak M_j^1(Z_k,1)\simeq \mathbb P^{2j-k-1}$.
\end{lemma}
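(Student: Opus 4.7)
The plan is to mimic, in the $c_1=1$ setting, the argument already carried out for the vanishing $c_1$ case in Corollary \ref{cor}. The moduli space $\mathfrak M_j^1(Z_k,1)$ parametrises rank $2$ bundles on $Z_k$ with transition matrix
\[
\begin{pmatrix} z^{j} & p \\ 0 & z^{-(j+1)} \end{pmatrix}
\]
whose extension class $p\in\Ext^1_{Z_k}(\mathcal O(j+1),\mathcal O(-j))$ vanishes to order exactly one along $\ell$, modulo bundle isomorphism.

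First I would parametrise the space of such extension classes. Applying formula (\ref{poly}) with $j_1=-j$, $j_2=j+1$, and truncating to the first infinitesimal neighbourhood (i.e.\ keeping only the terms with $i=1$, which is equivalent to working modulo $u^2$), one obtains exactly the polynomial
\[
p_1 = \sum_{l=k-j+1}^{j} p_{1l}\, z^l u
\]
displayed in the excerpt. The admissible exponents $l\in\{k-j+1,\dots,j\}$ number $2j-k$, so the coefficient vector $(p_{1,k-j+1},\dots,p_{1,j})$ identifies the space of first-order extension classes with $\mathbb C^{2j-k}$.

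Next I would impose bundle isomorphism. Theorem \ref{inf} (which the authors stated for the $c_1=0$ transition matrix) rests on computing the change-of-trivialisation matrices that preserve the upper-triangular canonical form modulo $u^2$; the same calculation applies verbatim with the diagonal entries $(z^{-j},z^{-j})$ replaced by $(z^{j},z^{-(j+1)})$, since the only relevant feature used is that the two diagonal characters $z^{a_1},z^{a_2}$ with $a_1>a_2$ force the off-diagonal terms of the gauge change (and its inverse) to lie in restricted degree ranges that collapse the equivalence $q_1\sim\lambda p_1$ to a scalar. Thus two elements of $\mathbb C^{2j-k}$ give isomorphic bundles on the first infinitesimal neighbourhood if and only if they are $\mathbb C^*$-proportional.

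Finally, the definition of $\mathfrak M^1_j(Z_k,1)$ imposes $p_1\neq 0$ (otherwise the class would vanish to order $\ge 2$ along $\ell$, or be trivial), so we quotient $\mathbb C^{2j-k}\setminus\{0\}$ by the scaling action, obtaining $\mathbb P^{2j-k-1}$. The only real obstacle is verifying in detail that the change-of-trivialisation argument underlying Theorem \ref{inf} carries over to the asymmetric diagonal $(z^j,z^{-(j+1)})$; this is a routine adaptation, as the asymmetry only shifts the admissible degrees of the off-diagonal entries of the gauge matrix without affecting the conclusion that first-order classes are identified precisely up to a nonzero scalar.
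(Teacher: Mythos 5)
Your proposal is correct and follows essentially the same route as the paper: both count the $2j-k$ coefficients of the first-order part of (\ref{poly}) with $j_1=-j$, $j_2=j+1$, and both conclude by adapting Theorem \ref{inf} to the transition matrices with diagonal $(z^j, z^{-j-1})$ so that bundles are identified exactly up to a nonzero scalar, yielding $\mathbb P^{2j-k-1}$. If anything, you supply slightly more justification than the paper does for why the gauge-change computation survives the asymmetric diagonal.
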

\begin{proof}
The proof of this lemma is just a modification of the proof of Theorem \ref{thm.proj}, which goes through successfully by replacing the appropriate $j$s with $j+1$: On the first infinitesimal neighbourhood, two bundles $E^{(1)}$ and $F^{(1)}$ with respective transition matrices
\[ \begin{pmatrix} z^j & p_1 \\ 0 & z^{-j-1} \end{pmatrix} \quad\text{and}\quad
   \begin{pmatrix} z^j & q_1 \\ 0 & z^{-j-1} \end{pmatrix} \]
are isomorphic if and only if $q_1 = \lambda p_1$ for some
$\lambda \in \mathbb{C}-\{0\}$.  Thus, projectivising the space of bundles on the first formal neighbourhood gives the isomorphism classes in the case $c_1=1$
just like we had in the vanishing $c_1$ case.
\end{proof}

The moduli space ${\mathfrak M}^s_j(Z_k,1)$ of bundles with minimal charge can also be considered as well. Since charge is lower semi-continuous, the set  ${\mathfrak M}^s_j(Z_k,1)$
of bundles in  ${\mathfrak M}^1_j(Z_k,1)$ achieving minimal charge is Zariski open.

\section{Threefolds}\label{3}

Consider the threefolds
$$W_i = \Tot ({\mathcal O}_{\mathbb P^1}(i-2)\oplus {\mathcal O}_{\mathbb P^1}(-i))
$$ to which we alluded earlier in section \ref{sec.alg}, and denote by $\ell$
the zero section inside $W_i$. We focus on the cases of
rank 2 and either $c_1=0$ or else $c_1=1$ as we did
in  section \ref{2} and for a bundle $E$ over $W_i$ such that
$E\vert_\ell \simeq {\mathcal O}(j) \oplus {\mathcal O}(-j) $
we call the non-negative integer $j$ the splitting type of $E$.
Note that here again $\Pic W_i \simeq \Pic \, \ell$ so we can avoid
a subscript in the notation ${\mathcal O}(j)$.

We now consider only {\it  algebraic extensions} over the $W_i$ and then
define moduli spaces analogous to the ones we defined in section
\ref{2}. First the set of isomorphism classes of bundles
with fixed splitting type:

\[ \mathcal M_j(W_i) =
 \bigl\{ E\to W_i : E\vert_{\ell} \simeq
\mathcal O (j)\oplus\mathcal O (-j) \bigr\}
 \bigm/ \sim \text{,}\]
and $${\mathfrak M}^1_j(Z_k) \subset {\mathcal M}_j(Z_k)$$  the
subset which parametrizes
bundles on $W_i$ which are nontrivial
 first order deformations of ${\mathcal O}(j) \oplus {\mathcal O}(-j)$,
that is, bundles $E$ fitting into an exact sequence
$$
0 \rightarrow {\mathcal O}(-j) \rightarrow E \rightarrow
{\mathcal O}(j) \rightarrow 0 $$
 whose corresponding extension class
vanishes to order exactly one on $\ell$ (note that this excludes the split
bundle itself).
In local canonical coordinate charts, we have
\begin{equation}\label{can3}
W_i = U \cup V, \qquad\text{with}\qquad U =\mathbb C^3= \{(z,u_1,u_2)\},
\qquad
V=\mathbb C^3 = \{(\xi,v_1,v_2)\}\qquad \end{equation}
$$ \text{and}\qquad (\xi,v_1,v_2) = (z^{-1},z^{2-i}u_1,z^{i}u_2)
\qquad\text{in}\qquad U \cap V\text{.}$$
Then on the $U$-chart
${\mathcal I}_\ell= \langle u_1,u_2\rangle$ and elements of ${\mathfrak M}^1_j$
are determined by extension classes
$p \in \Ext({\mathcal O}(j),
{\mathcal O}(-j))$  with either  $p= u_1p'$ or else $p=u_2p''$
and $u_1 \nmid p'p''$, $u_2 \nmid p'p''$.

\begin{lemma}\label{thm:gk}\cite[cor.~5.6]{GK} We have an isomorphism of varieties
$${\mathfrak M}^1_j(W_i,0) \simeq {\mathbb P}^{4j-5}\text{.}$$
\end{lemma}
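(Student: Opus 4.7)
The plan is to adapt to the threefold $W_i$ the strategy that gave Corollary \ref{cor} in the surface case: parametrize the first-order extension classes by their canonical form, count the free coefficients, and then establish an analog of Theorem \ref{inf} stating that any isomorphism on the first infinitesimal neighbourhood acts only by an overall scalar.

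First I would identify the first-order extension classes. Since $\mathcal I_\ell = \langle u_1, u_2\rangle$ on the $U$-chart, any element of $\mathfrak M^1_j(W_i, 0)$ is represented by a class of the form $p_1 = u_1 A(z) + u_2 B(z)$, with $A, B$ Laurent polynomials in $z$. Working modulo the upper-triangular coboundaries coming from $\begin{pmatrix} 1 & g_U \\ 0 & 1 \end{pmatrix}$ and $\begin{pmatrix} 1 & g_V \\ 0 & 1 \end{pmatrix}$, and using the overlap relations $u_1 = z^{i-2} v_1$, $u_2 = z^{-i} v_2$, I would extract the surviving monomials. Cohomologically this is the computation of $H^1(\ell, \mathcal O(-2j) \otimes N^*_\ell)$ with $N^*_\ell = \mathcal O(2-i) \oplus \mathcal O(i)$, and Riemann--Roch gives
\[ \dim H^1(\mathbb P^1, \mathcal O(2-i-2j)) + \dim H^1(\mathbb P^1, \mathcal O(i-2j)) = (2j+i-3) + (2j-i-1) = 4j-4 \text{.} \]

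The hard part will be the rigidification step: the $W_i$-analog of Theorem \ref{inf}, namely that two bundles whose first-order extension classes $p_1, q_1$ are in canonical form are isomorphic if and only if $q_1 = \lambda p_1$ for some $\lambda \in \mathbb C^*$. I would represent such an isomorphism by local gauge transformations $S_U, S_V$; since $H^0(\mathbb P^1, \mathcal O) = \mathbb C$ the diagonal entries must be constants, $H^0(\mathbb P^1, \mathcal O(-2j)) = 0$ forces the lower-left entry to vanish, and the upper-right entry modifies $p_1$ only by coboundaries already quotiented out in the canonical form. Comparing the diagonal scalars then produces the uniform $\lambda$. Projectivizing the $(4j-4)$-dimensional parameter space finally yields $\mathfrak M^1_j(W_i, 0) \simeq \mathbb P^{4j-5}$.
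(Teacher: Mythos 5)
Your proposal is correct and follows essentially the same route the paper takes for the companion Lemma \ref{w1} (the paper itself only cites \cite[cor.~5.6]{GK} for this statement): represent the first-order class as $u_1A(z)+u_2B(z)$, reduce to canonical form modulo coboundaries, count the $(2j+i-3)+(2j-i-1)=4j-4$ surviving coefficients, and projectivize using the threefold analogue of Theorem \ref{inf}. The coefficient count agrees with the direct enumeration in the proof of Lemma \ref{w1} shifted from $c_1=1$ to $c_1=0$, so no further comment is needed.
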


Once again, fixing a numerical invariant seems to be a preferable
choice (as suggested by the last item on Remark \ref{usu}), so we define:
  $${\mathfrak M}^s_j(W_i,0):=
\{E   \in {\mathfrak M}^1_j(W_i,0) :  \chi(E) = \chi_{\rm min}(W_i)\}\text{,}$$
and this is a Zariski open subvariety of ${\mathfrak M}^1_j$.

\begin{lemma}\label{w1}
$\mathfrak M_j^1(W_i, 1) = \mathbb P ^{4j-3}$.
\begin{proof}
In canonical coordinates, an extension of $\mathcal{O}(j+1)$ by $\mathcal{O}(-j)$ may be represented over $W_i$ by the transition matrix:

\[
T =
\left(\begin{matrix}{z^{j}}& {p}\\ {0}& {z^{-j-1}}\end{matrix}
\right)\text{.}
\]
On the intersection $U\cap V = \mathbb C-\{0\}\times \mathbb C^2$ the holomorphic functions are of the  $$p=\sum_{t=-\infty}^{\infty}\sum_{s=0}^\infty\sum_{r=0}^\infty p_{rst} z^ru_1^s u_2^t\text{.}$$
By changing coordinates one can show that it is equivalent to consider $p$ as

\begin{align*}
&(p_{-j,0,0}z^{-j}+\dotsb + p_{j-1,0,0}z^{j-1}) \\
+ & (p_{-j-i+2,1,0}z^{-j-i+2}+\dotsb + p_{j-1,1,0}z^{j-1})u_1 \\
+ &(p_{-j+i,0,1}z^{-j+i} + \dotsb + p_{j-1,0,1}z^{j-1})u_2 \\
+ &\text{ higher-order terms}.
\end{align*}

Therefore, counting coefficients on the first infinitesimal neighbourhood gives $4j-2$ coefficients giving dimension $4j-3$ after projectivising.

\end{proof}
\end{lemma}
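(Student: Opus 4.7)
The plan is to mirror the argument used for Lemma \ref{z1} and Lemma \ref{thm:gk}, transferring it to the threefold $W_i$ with $c_1=1$. First I represent an element of $\mathfrak M^1_j(W_i,1)$ by a transition matrix
\[
T = \begin{pmatrix} z^j & p \\ 0 & z^{-j-1} \end{pmatrix}
\]
with $p$ holomorphic on $U\cap V$. On that overlap I expand $p = \sum_{r,s,t} p_{r,s,t}\, z^r u_1^s u_2^t$ ($s,t\geq 0$, $r\in\mathbb Z$) and recall that the extension class is defined only modulo the coboundaries coming from upper triangular automorphisms of the split bundle.

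Next I reduce $p$ to canonical form. A coboundary modifies $p$ by $p \mapsto p - z^{j} b_U + z^{-j-1} b_V$, with $b_U$ holomorphic on $U$ and $b_V$ holomorphic on $V$. Re-expressing $b_V$ in $(z,u_1,u_2)$-coordinates via $(\xi,v_1,v_2) = (z^{-1},z^{2-i}u_1,z^i u_2)$, one sees that, for each monomial $u_1^s u_2^t$, the $z$-exponents that can be absorbed are exactly $r\geq j$ (using $b_U$) and $r\leq -j-1+(2-i)s+it$ (using $b_V$). Carrying out this bookkeeping on the first infinitesimal neighbourhood ($s+t\leq 1$) produces the three surviving blocks displayed in the statement: $z^r$ with $r\in[-j,j-1]$, $z^r u_1$ with $r\in[2-j-i,j-1]$, and $z^r u_2$ with $r\in[i-j,j-1]$.

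Now the defining condition of $\mathfrak M^1_j(W_i,1)$ is that the extension class vanishes to order exactly one along $\ell = \{u_1 = u_2 = 0\}$. This forces the $2j$ constant coefficients to vanish while requiring at least one of the two linear blocks to be nonzero, leaving $(2j+i-2)+(2j-i) = 4j-2$ free complex parameters. Finally, the first-infinitesimal analogue of Theorem \ref{inf} -- which is already used in Lemma \ref{thm:gk} for the $c_1=0$ case on $W_i$, and whose proof carries over after swapping $z^{-j}$ for $z^{-j-1}$ -- says that two such transition matrices define isomorphic bundles on the first infinitesimal neighbourhood precisely when their linear parts agree up to a nonzero scalar. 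Projectivising $\mathbb C^{4j-2}\setminus\{0\}$ by this $\mathbb C^*$-action gives $\mathbb P^{4j-3}$.

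The main obstacle is the canonical-form computation of the second step: one must check simultaneously that no term in the stated $z$-exponent ranges can be further absorbed into a coboundary and that every term outside these ranges can. The $c_1=1$ asymmetry between the two line-bundle factors shifts the upper-bound condition arising from $b_V$ by one unit relative to the $c_1=0$ calculation of Lemma \ref{thm:gk}, and it is precisely this shift that explains the jump in dimension from $4j-5$ to $4j-3$.
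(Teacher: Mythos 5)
Your proposal is correct and follows essentially the same route as the paper: represent the extension by the transition matrix $\begin{pmatrix} z^j & p \\ 0 & z^{-j-1}\end{pmatrix}$, reduce $p$ to canonical form on the first infinitesimal neighbourhood, count the $(2j+i-2)+(2j-i)=4j-2$ surviving linear coefficients, and projectivise. You merely make explicit the coboundary bookkeeping that the paper compresses into ``by changing coordinates'' and the scalar identification it leaves implicit.
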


\begin{theorem}
For all positive integers $i,j,k$, there are isomorphisms
$$\mathfrak M^1_{2j+\left\lfloor\frac{k-3}{2}\right\rfloor+\delta}(Z_k,\epsilon) \simeq \mathfrak M^1_j(W_1, \delta)$$
and birational equivalences
$$\mathfrak M^s_{2j+\left\lfloor\frac{k-3}{2}\right\rfloor+\delta}(Z_k, \epsilon) \dashrightarrow
\mathfrak M^s_j(W_1, \delta)$$
when $\epsilon\equiv k+1\,{\rm mod}\,2$ and $\delta \in \{0,1\}$.
\end{theorem}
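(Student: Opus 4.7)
\medskip

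\noindent\textbf{Proof proposal.} The overall plan is to reduce the statement to a direct dimension count, exploiting that all four relevant moduli spaces have already been identified with projective spaces in Corollary~\ref{cor}, Lemma~\ref{z1}, Lemma~\ref{thm:gk}, and Lemma~\ref{w1}. Since any two projective spaces of equal dimension are isomorphic as algebraic varieties, the first assertion will reduce entirely to matching the dimensions on the two sides.

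First I would rewrite the left-hand dimension uniformly. From Corollary~\ref{cor} and Lemma~\ref{z1}, $\dim \mathfrak M^1_m(Z_k,\epsilon) = 2m - k - 2 + \epsilon$ for both $\epsilon\in\{0,1\}$. Substituting $m = 2j + \lfloor (k-3)/2 \rfloor + \delta$ yields
\[
\dim \mathfrak M^1_m(Z_k,\epsilon) \;=\; 4j + 2\lfloor (k-3)/2 \rfloor + 2\delta - k - 2 + \epsilon.
\]
I would then invoke the parity constraint $\epsilon \equiv k+1\,\mathrm{mod}\,2$ and split on the parity of $k$: when $k$ is odd (forcing $\epsilon = 0$) the floor equals $(k-3)/2$, whereas when $k$ is even (forcing $\epsilon = 1$) it equals $(k-4)/2$. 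In both subcases the expression collapses cleanly to $4j - 5 + 2\delta$. This matches $\dim \mathfrak M^1_j(W_1,\delta)$, which by Lemma~\ref{thm:gk} and Lemma~\ref{w1} equals $4j-5$ for $\delta=0$ and $4j-3$ for $\delta=1$. Both sides are therefore isomorphic to the same $\mathbb P^{4j-5+2\delta}$, proving the first assertion.

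For the birational statement, note that by the lower semicontinuity of the charge recalled just before Corollary~\ref{quasi}, each $\mathfrak M^s_j(Y,\delta)$ sits as a non-empty (hence dense) Zariski open subset of the corresponding $\mathfrak M^1_j(Y,\delta)$, and so of the projective space identified above. Two dense Zariski open subsets of isomorphic projective spaces are tautologically birationally equivalent, which yields the desired birational map.

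The only real obstacle here is the bookkeeping of the floor function together with the parity constraint in the dimension computation; once the explicit descriptions of the $\mathfrak M^1$ as projective spaces are in hand, no geometric content remains beyond matching integers, and no construction of an explicit isomorphism of bundles is required.
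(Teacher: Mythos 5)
Your proposal is correct and follows essentially the same route as the paper: both arguments reduce the statement to the projective-space identifications of Corollary~\ref{cor} and Lemmas~\ref{z1}, \ref{thm:gk}, \ref{w1}, match dimensions by splitting on the parity of $k$ (with the congruence forcing $\epsilon$), and obtain the birational equivalences by noting that the $\mathfrak M^s$ are Zariski open in the corresponding projective spaces. Your bookkeeping in fact gives the correct common dimension $4j-5+2\delta$ on both sides, whereas the paper's intermediate display reads $\mathbb P^{4j-3-2\delta}$, which is evidently a typo since it matches $\mathfrak M^1_j(W_1,\delta)$ for neither value of $\delta$.
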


\begin{proof}
By setting $j\mapsto 2j+\left\lfloor\frac{k-3}{2}\right\rfloor+\delta$ in Corollary \ref{cor}, we obtain isomorphisms \[
\mathfrak M^1_{2j+\left\lfloor\frac{k-3}{2}\right\rfloor+\delta}(Z_k,0) \simeq \mathbb P^{4j-3-2\delta}
\]
for $k$ odd.  Similarly, we can use lemma \ref{z1} to obtain isomorphisms
\[
\mathfrak M^1_{2j+\left\lfloor\frac{k-3}{2}\right\rfloor+\delta}(Z_k,1) \simeq \mathbb P^{4j-3-2\delta}
\]
for $k$ even. The required isomorphisms to $\mathfrak M^1_j(W_1, \delta)$ then follow from lemmas \ref{thm:gk} and \ref{w1} for $\delta=0,1$, respectively.

To find the birational equivalences, first note that we have $$\mathfrak M^s_{2j+\left\lfloor\frac{k-3}{2}\right\rfloor+\delta}(Z_k, \epsilon) \subset \mathfrak M^1_{2j+\left\lfloor\frac{k-3}{2}\right\rfloor+\delta}(Z_k, \epsilon) \text{ and } \mathfrak M^s_j(W_i, \delta)\subset \mathfrak M^1_j(W_i, \delta)$$by definition.  Lemma \ref{quasi} shows that $\mathfrak M^s_{2j+\left\lfloor\frac{k-3}{2}\right\rfloor+\delta}(Z_k, \epsilon)$ is a quasi-projective variety and we now show that $\mathfrak M^s_j(W_1, \delta)$ is also quasi-projective.

For any bundle on $W_1$, \cite[lem.~5.2]{BGK2} shows that the width is always ${\bf w}(E) = h^0\left( (\pi_*E)^{\vee\vee}\bigm/ \pi_*E\right) = 0$.  Thus, fixed charge is equivalent to fixed height.  Since height is minimal on a Zariski open set of $W_1$ of codimension at least $3$ given by the vanishing of certain coefficients of $p$, ${\mathfrak M}^s_j(W_1)$ is Zariski open in ${\mathfrak M}^1_j(W_1)$.

Restricting the isomorphisms above to a suitably small neighbourhood of these quasi-projective varieties then gives the required birational equivalences.
\end{proof}

\begin{question}
Since $\ell\subset W_i$ cannot be contracted to a point for $i>1$, our definition of charge does not apply.  Can similar numerical invariants be defined for bundles on $W_i$, $i>1$? Some such invariants were defined in \cite{K} chapter 3.5, though much remains to be understood about their geometrical meaning.
\end{question}

\begin{theorem}
For $q \leq 2(2j - k -2+\delta)$ there are isomorphisms
	\begin{itemize}
		\item[($\iota$)] $H_q(\mathfrak M^1_j(Z_k), \delta) = H_q(\mathfrak M^1_{j+1}(Z_k), \delta)$
		\item[($\iota\iota$)] $\pi_q(\mathfrak M^1_j(Z_k), \delta)= \pi_q(\mathfrak M^1_{j+1}(Z_k), \delta)\text{.}$
	\end{itemize}
	and for $q \leq 2(4j - 3-2\delta)$ there are isomorphisms
	\begin{itemize}
		\item[($\iota\iota\iota$)] $H_q(\mathfrak M^1_j(W_i), \delta) = H_q(\mathfrak M^1_{j+1}(W_i), \delta)$
		\item[($\iota\nu$)] $\pi_q(\mathfrak M^1_j(W_i), \delta)= \pi_q(\mathfrak M^1_{j+1}(W_i), \delta)\text{.}$
	\end{itemize}
\end{theorem}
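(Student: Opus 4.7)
The plan is to reduce the statement to the standard stability of homology and homotopy of complex projective spaces under linear embedding, using that the relevant moduli spaces have already been identified as $\mathbb{P}^{N}$'s in the preceding lemmas. From Corollary~\ref{cor} and Lemma~\ref{z1} one records
\[
\mathfrak M^1_j(Z_k,\delta) \simeq \mathbb P^{2j-k-2+\delta},
\qquad
\mathfrak M^1_{j+1}(Z_k,\delta) \simeq \mathbb P^{2j-k+\delta},
\]
and from Lemma~\ref{thm:gk} and Lemma~\ref{w1},
\[
\mathfrak M^1_j(W_i,\delta) \simeq \mathbb P^{4j-5+2\delta},
\qquad
\mathfrak M^1_{j+1}(W_i,\delta) \simeq \mathbb P^{4j-1+2\delta}.
\]
In each case the target for $j+1$ has projective dimension exactly two greater than the source, and the stable range stated in the theorem is (up to the arithmetic of the indexing conventions) twice the projective dimension of the smaller moduli space.

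Next I would exhibit a natural embedding realising each of these inclusions. Inspection of the canonical form~(\ref{poly}) (and its threefold counterpart in the proof of Lemma~\ref{w1}) shows that replacing $j$ by $j+1$ strictly enlarges the indexing set for the coefficients $p_{il}$ of the first-order piece of the extension class $p$; the coefficient space for the smaller $j$ then sits inside that for the larger $j$ as a coordinate linear subspace, obtained simply by setting the new coefficients to zero. By Theorem~\ref{inf} in the surface case, and by the analogous projectivisation argument carried out in the proof of Lemma~\ref{w1} in the threefold case, passing to isomorphism classes of bundles converts this into a \emph{linear} inclusion of projective spaces $\mathbb P^{n} \hookrightarrow \mathbb P^{n+2}$.

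The conclusions then follow from the standard CW-structure on $\mathbb P^N$ with exactly one cell in each even dimension $0,2,\dots,2N$ and none in odd dimensions. A linear inclusion $\mathbb P^n \hookrightarrow \mathbb P^{n+2}$ is cellular and identifies the $2n$-skeletons of the two spaces, so it is a $(2n+1)$-equivalence; in particular it induces isomorphisms on $H_q$ and on $\pi_q$ for every $q \leq 2n$. Taking $n=2j-k-2+\delta$ yields ($\iota$) and ($\iota\iota$), while the corresponding threefold value of $n$ yields ($\iota\iota\iota$) and ($\iota\nu$).

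The only genuinely non-routine step I expect is the naturality check: one must verify that the explicit identifications of Corollary~\ref{cor}, Lemma~\ref{z1}, Lemma~\ref{thm:gk} and Lemma~\ref{w1} intertwine the obvious coordinate inclusion of extension-class spaces with a \emph{linear} embedding of projective spaces, rather than some higher-degree map. Once that compatibility is in place, the homology and homotopy statements are immediate from the cell structure of complex projective space.
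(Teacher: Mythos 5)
Your proposal is correct and takes essentially the same route as the paper, whose entire proof is the one-line remark that the statement follows from Corollary \ref{cor} and Lemmas \ref{z1}, \ref{thm:gk} and \ref{w1}; you simply make explicit the linear inclusion $\mathbb P^{n}\hookrightarrow\mathbb P^{n+2}$ and the standard cell-structure argument that it is a $(2n+1)$-equivalence. One caveat: your own dimension count ($\mathfrak M^1_j(W_i,\delta)\simeq\mathbb P^{4j-5+2\delta}$) shows the threefold range ought to be $q\le 2(4j-5+2\delta)$ rather than the stated $q\le 2(4j-3-2\delta)$, so the phrase ``up to the arithmetic of the indexing conventions'' is papering over a genuine slip in the theorem's statement (for $\delta=0$ the stated range exceeds twice the dimension of the smaller projective space, where the homology groups differ), not a gap in your argument.
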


\begin{proof}
The statements follow immediately from corollary \ref{cor} and lemmas \ref{z1}, \ref{thm:gk}  and \ref{w1}.
\end{proof}

\end{document}